\numberwithin{equation}{section}
\title{Hilbert transforms along Lipschitz direction fields: A lacunary model}
\author{Shaoming Guo, Christoph Thiele}
\date{}
\def\R{\mathbb{R}}
\def\N{\mathbb{N}}
\def\Z{\mathbb{Z}}
\def\beq{\begin{equation}}
\def\endeq{\end{equation}}
\theoremstyle{plain}
\newtheorem{thm}{Theorem}[section]
\newtheorem{lem}[thm]{Lemma}
\newtheorem*{conj*}{Lipschitz Differentiation Conjecture}
\newtheorem*{openproblem*}{Open Problem}
\begin{document}
\maketitle

\begin{abstract}

We prove bounds for the truncated directional Hilbert transform in $L^p(\R^2)$ for any $1<p<\infty$ under a combination of a Lipschitz assumption and a 
lacunarity assumption. It is known that a lacunarity assumption alone is 
not sufficient to yield boundedness for $p=2$, and it is a major question 
in the 
field whether a Lipschitz assumption alone suffices, at least for some $p$.

\end{abstract}

\let\thefootnote\relax\footnote{Date: \date{\today}; MSC classes:  42B20, 42B25.\\
Key words: Hilbert transform, lacunary directions, Lipschitz regularity}



\section{Introduction}

The directional Hilbert transform and directional maximal operator in the plane
have been recurrently studied in the literature, see \cite{Ba3}, \cite{Ba2}, \cite{BT}, \cite{Guo1}, \cite{Ka}, \cite{Katz}, \cite{LL1}, \cite{LL2}, \cite{NSW} and the references therein. A prominent question concerns suitable 
assumptions on the direction field, under which these operators are bounded in some $L^p(\R^2)$.

To be specific, define directional operators in the following form, parametrized 
by a measurable function $u:\R^2\to (0,1]$, and originally defined for functions $f$ in the Schwartz class ${\mathcal{S}(\R^2)}$:
\beq
H_u f(x,y)=p.v. \int_{-1}^1 f(x-t,y-u(x,y)t)\frac {dt} t\ ,
\endeq
\beq
M_u f(x,y)=\sup_{\epsilon<1} \frac{1}{2\epsilon}\int_{-\epsilon}^\epsilon |f(x-t,y-u(x,y)t)| {dt}\ . 
\endeq

One natural assumption is that $u$ is Lipschitz with 
sufficiently small Lipschitz constant. One of the main 
open questions in the area is whether $H_u$ and $M_u$ are bounded in 
$L^2(\R^2)$ in this case. No $L^p(\R^2)$ bounds other than the trivial 
$L^\infty(\R^2)$ bound for $M_u$ are known. An extensive 
discussion of this pair of conjectures appears in the work of 
Lacey and Li \cite{LL1}, \cite{LL2}.

Another natural assumption is that $u$ takes values
in a  lacunary set. With such a mere assumption on the range of $u$,
the truncation of the integral in $H_u$ 
to $[-1,1]$ and the constraint $\epsilon<1$ in $M_u$ are ineffective as they can be transformed by scaling. The following theorem addresses $M_u$ and is a special case of a theorem proven by Nagel, Stein and 
Wainger \cite{NSW}. For simplicity we shall restrict attention to a particular lacunary set.
\begin{thm}[\cite{NSW}]\label{nsw}
For all $1<p\le \infty$ there is a constant $C_p$ such that  
for all measurable functions $u$ on $\R^2$ with values in the set $\{2^{-j}, j\in \N\}$
we have
\beq
\|\sup_{\epsilon} \frac{1}{2\epsilon} \int_{-\epsilon}^\epsilon  |f(x-t,y-u(x,y)t)| {dt}\|_p\le C_p\|f\|_p. 
\endeq
\end{thm}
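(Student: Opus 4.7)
The $L^\infty$ bound is immediate with constant $1$, so I focus on $1<p<\infty$. My plan is to linearize the supremum and then reduce to the $L^p$ boundedness of a strong maximal operator taken over a lacunary set of rectangle orientations. I would measurably select $j(x,y)\in\N$ and $\epsilon(x,y)\in(0,1]$ so that the supremum in $M_u f(x,y)$ (with $u(x,y)=2^{-j(x,y)}$) is essentially attained. This yields a sublinear operator
\[
Tf(x,y)=\frac{1}{2\epsilon(x,y)}\int_{-\epsilon(x,y)}^{\epsilon(x,y)}|f(x-t,y-2^{-j(x,y)}t)|\,dt,
\]
to be bounded on $L^p$ uniformly in the linearizing choice of $j$ and $\epsilon$.

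The next step is to pass from this one-dimensional segment average to a two-dimensional rectangle average by a standard smoothing trick: convolve $|f|$ with a narrow bump in the direction perpendicular to $(1,2^{-j(x,y)})$ and absorb the resulting transverse error by a one-dimensional Hardy--Littlewood maximal. This yields a pointwise domination of the form $Tf(x,y)\lesssim \mathcal{M}_\Theta \widetilde{M} f(x,y)$, where $\mathcal{M}_\Theta$ is the strong maximal operator over all rectangles in $\R^2$ whose long axis lies in the lacunary direction set $\Theta=\{(1,2^{-j})/\sqrt{1+4^{-j}}:j\in\N\}$ and $\widetilde{M}$ is a standard $L^p$-bounded auxiliary maximal function. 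The theorem thus reduces to the classical $L^p$ boundedness of $\mathcal{M}_\Theta$ on $L^p(\R^2)$ for $1<p\le\infty$.

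To prove the $L^p$ bound for $\mathcal{M}_\Theta$ I would follow the Str\"omberg / C\'ordoba--Fefferman covering scheme. By interpolation with the trivial $L^\infty$ bound it suffices to establish a weak-type $(p_0,p_0)$ estimate for some $1<p_0<\infty$. For each point in a super-level set $\{\mathcal{M}_\Theta f>\lambda\}$ select a rectangle $R(x,y)$ realizing the defining average; the goal is a covering lemma showing that a suitable sub-collection of these rectangles has controlled overlap. The key geometric input is the lacunary gap: two orientations $2^{-j_1},2^{-j_2}$ with $j_1<j_2$ differ by at least $2^{-j_1-1}$, so rectangles in distinct lacunary orientations intersect only in small parallelograms whose combined area decays geometrically. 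A greedy selection ordered by decreasing $j$, combined with this angular gap, yields the desired overlap estimate.

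The technical heart of the argument is this inductive covering. One must show that after selecting a maximal sub-family of rectangles in each orientation $2^{-j}$, the mass contributed by rectangles of coarser orientations $j'<j$ is geometrically controlled by the mass already accounted for, with constants summable in $j$. This delicate bookkeeping is where lacunarity of $\{2^{-j}\}$ is crucially used; for a direction set merely dense in $(0,1]$ the covering step fails, which is the geometric reflection of the Lipschitz conjecture discussed in the introduction, and is the principal obstacle I would expect to confront in completing the proof.
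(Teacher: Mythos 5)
The paper cites this theorem from Nagel, Stein, and Wainger \cite{NSW} without giving a proof, so there is no in-paper argument to compare against; I will evaluate your proposal on its own merits. The route you outline---dominate the segment maximal by the lacunary strong maximal $\mathcal{M}_\Theta$ over rectangles, then prove $L^p$ bounds for $\mathcal{M}_\Theta$ via a covering lemma---is a legitimate approach known in the literature, close in spirit to the C\'ordoba--Fefferman/Str\"omberg treatment of lacunary directional maximal functions. It is, however, a genuinely different proof from the one actually given in \cite{NSW}, which is Fourier-analytic: Nagel, Stein, and Wainger decompose in frequency, prove $L^2$ bounds by square-function/almost-orthogonality estimates that exploit the lacunary gap, and bootstrap to all $p>1$. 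Each route has its advantages; the Fourier proof avoids the delicate geometric bookkeeping entirely, while the covering proof localizes the use of lacunarity to a single combinatorial lemma.

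Two gaps in your proposal. A minor one: the linearization is unnecessary, since $M_u f\le \sup_{j\in\N} M_{2^{-j}}f$ pointwise, so you may pass directly to the supremum over the fixed lacunary set; moreover, the pointwise domination of a segment average by a rectangle average of a smoothed $f$ needs more care than ``convolve with a narrow bump,'' because the transverse auxiliary maximal operator depends on $j$---one must use that the perpendicular directions $(2^{-j},-1)$ all lie in a fixed cone about the vertical to replace them by a single one-dimensional maximal in the $y$-variable. The major gap is the one you flag yourself: the covering lemma is not carried out, only sketched. That inductive selection, with overlap estimates summable in the lacunary index, is where all the difficulty lives (it is also why the weak-$(1,1)$ endpoint fails and $p_0>1$ is forced), and the proposal stops precisely at the point where the real work begins. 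As it stands this is a proof plan, not a proof.
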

It is known that a suitable generalization of lacunarity is the precise 
assumption on the range of $u$ to make $M_u$ bounded, see the work of Katz \cite{Katz} 
and Bateman \cite{Ba3} . In contrast, Karagulyan \cite{Ka} proved that $H_u$ is not bounded 
for a lacunary set of directions from $L^2(\R^2)$ to $L^{2, \infty}(\R^2)$.\\

The purpose of this paper is to look at $H_u$ for a combination of 
a Lipschitz assumption and a lacunarity assumption. Let $[x]$ denote the 
largest integer less than $x$. 

\begin{thm}\label{main}
Let $u:\R^2\to (0,1]$ be Lipschitz with $\|u\|_{Lip}\le 1$ and let $v:\R^2\to (0,1]$ 
be defined by
\beq
\log_2 v =[\log_2 u]\ .
\endeq
Then for every Schwartz
function $f$ and every $p\in (1, \infty)$ we have 
\beq\label{0312ee1.1}
\|H_v f\|_p \le C_p \|f\|_p.
\endeq
Here $C_p$ is a constant depending only on $p$.
\end{thm}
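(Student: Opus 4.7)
The plan is to combine the lacunary maximal bound of Theorem~\ref{nsw} with the $L^p$-boundedness of fixed-direction Hilbert transforms, using the Lipschitz structure of $v$ to mediate between them. Decompose the kernel dyadically as
\[
H_v f = \sum_{k \ge 0} T_k f, \qquad T_k f(x,y) = \int \psi_k(t)\, f(x-t, y - v(x,y) t)\, \frac{dt}{t},
\]
with $\psi_k$ supported in $|t| \sim 2^{-k}$, and stratify the plane by the level sets $E_j = \{v = 2^{-j}\} = \{u \in [2^{-j}, 2^{-j+1})\}$. On $E_j$ the operator $T_k$ acts as a fixed-slope operator $T_{k,j}$ of slope $2^{-j}$, bounded on $L^p$ uniformly in $(k,j)$ after an affine change of variables; summing these bounds trivially across the disjoint $E_j$ fails, so the two hypotheses must work together to recover summability.

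The core idea is that the Lipschitz assumption forces $v$ to be essentially locally constant at scales finer than its value. Concretely, at $(x,y) \in E_j$ and for $|t| \le c\cdot 2^{-j}$, the Lipschitz bound $\|u\|_{\mathrm{Lip}} \le 1$ gives $u(x-t, y-2^{-j}t) \in [2^{-j-1}, 2^{-j+2}]$, so $v$ is pinned to a finite set of neighbouring lacunary values along the segment and in the generic interior of $E_j$ equals $2^{-j}$ throughout. This allows one to replace $T_k$ on $E_j$ by a constant-direction operator up to a controlled boundary error, and then to exploit the full cancellation of the 1D Hilbert transform. The resulting $L^2$ bound is assembled by extracting almost-orthogonality in the pair $(k,j)$: a Littlewood--Paley decomposition of $f$ in the frequency variable conjugate to $(1, 2^{-j})$, combined with the plate-like Fourier support of $T_{k,j}$, gives $\ell^2$-orthogonality in $k$, while the lacunary assumption on the range of $v$ is what makes these plates, indexed by $j$, almost disjoint in frequency.

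The main obstacle is the boundary layer around $\partial E_j \subset \{u = 2^{-j}\} \cup \{u = 2^{-j+1}\}$, where the constant-direction replacement fails. The $1$-Lipschitz hypothesis gives $\partial E_j$ controlled $1$-dimensional Hausdorff measure (via the coarea formula), so the error tube of width $\sim 2^{-k}$ has $2$-dimensional measure $\lesssim 2^{-k}$ per unit area. The critical point is that $\partial E_j$ is transverse (up to Lipschitz control) to the direction $(1, 2^{-j})$ of $T_{k,j}$, so each segment of integration crosses $\partial E_j$ at most a bounded number of times; this transversality produces cancellation rather than reinforcement across $(k,j)$. Once the $L^2$ bound is established, the extension to $L^p$ for $1 < p < \infty$ proceeds by real interpolation, using the pointwise maximal control $|T_k f| \lesssim M_v f$ together with Theorem~\ref{nsw} to supply the companion endpoint.
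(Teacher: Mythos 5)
Your sketch has the right high-level intuition (the Lipschitz bound pins $v$ to a bounded set of neighbouring lacunary values on segments of length $\lesssim 2^{-j}$), but it leaves open precisely the steps where the proof actually lives, and at least two of the steps you do indicate would not work as written. First, the claim that $v$ ``in the generic interior of $E_j$ equals $2^{-j}$ throughout'' is not used correctly: $E_j = \{u \in [2^{-j},2^{-j+1})\}$ can be a thin slab in which \emph{every} point is within $O(2^{-j})$ of $\partial E_j$, so the constant-direction replacement is \emph{never} exact and the ``boundary layer'' is the whole set. Second, the transversality claim for $\partial E_j$ and the direction $(1,2^{-j})$ is unjustified: nothing in a $1$-Lipschitz hypothesis on $u$ prevents level sets of $u$ from running tangentially to that direction. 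Third, and most seriously, the proposed passage from $L^2$ to $L^p$ by real interpolation against the maximal control $|T_k f| \lesssim M_v f$ does not go through: that pointwise bound is available scale by scale, but it is destroyed once you sum over $k$ to recover the singular operator, and Theorem~\ref{nsw} alone cannot absorb the cancellative part of $H_v$ (indeed Karagulyan's negative result shows lacunarity alone fails for $H_u$, so some Lipschitz-dependent input must persist at the $L^p$ level).

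The piece that would close the $\ell^2$-in-$j$ assembly, and which the paper uses as its workhorse, is the Cordoba--Fefferman vector-valued inequality \eqref{cfineq} for the family $H_{\tau,j}$: this is exactly the quantitative almost-orthogonality across the lacunary directions, and it is proved from Theorem~\ref{nsw}, not merely a formal Fourier-plate disjointness. The paper's route differs from yours on the remaining points as well: it passes to a Fourier multiplier $T[m]$, reduces to $T[1_{(\tau,\infty)}]$ by the fundamental theorem of calculus, and introduces a single integer parameter $d$ comparing the $\xi$- and $\eta$-Littlewood--Paley scales to the slope; for each fixed $d$ the Cordoba--Fefferman bound plus Khintchine plus Marcinkiewicz gives a uniform $L^p$ estimate, and decay in $|d|$ comes from extra frequency localization, not from a coarea/boundary-measure count. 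The Lipschitz hypothesis enters only at one concrete place, in the Section~4 error term $E$: after swapping the $z$- and $t$-integrations one compares $v(x,y)$ with $v(x,y-z)$ for $|z|\le t^{-1}$, and the Lipschitz bound forces $\log_2 u(x,y)$ and $\log_2 u(x,y-z)$ to differ by less than $1/4$, which collapses the mismatch domain to a single narrow dyadic annulus in $t$ governed by the rounded-up function $w$ with $\log_2 w=[1/2+\log_2 u]$. That is the mechanism your sketch is gesturing at with the boundary-layer measure bound, but made precise in a way that actually yields an $L^p$ estimate rather than just a measure estimate.
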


Note that the range of $v$ is contained in the set of integer powers of two,
a lacunary set. The function $v$ itself cannot be Lipschitz, unless it
is constant.\\

The above theorem has a dyadic model that we now present. 
For two points $p_1$ and $p_2$ in the unit line segment $(0,1]$ (or the
unit square $(0,1]^2$) we define the dyadic distance to be the length of the smallest dyadic interval $(a,b]$ (or the side length of the smallest dyadic square) that contains both points. For a dyadic interval $I \subset (0,1]$, denote by $h_I$ its $L^2$ normalized Haar function. For a map $v$ from $(0,1]^2$ to $(0,1]$ define the operator 
\beq\label{0412ee1.2}
H_{v,D}f(x,y)= 
\sum_{|J|/|I|\le v(x, y)} \langle f, h_I\otimes h_J\rangle h_I(x)h_J(y)\ ,
\endeq
where the sum runs over all dyadic rectangles $I\times J$ in the unit
square with the stated bound on the eccentricity.
We then have 
\begin{thm}\label{0312theorem1.4}
Let $v:(0,1]^2\to (0,1]$ have Lipschitz constant (with respect to
dyadic metric both on domain and target space) at most $1/2$ 
and assume it is lacunary in the sense it takes values in 
$\{2^{-k}, k\in \N\}$. Then for all $p\in (1, \infty)$, we have 
\beq\label{0312ee1.3}
\left\|H_{v,D} f\right\|_{p} \le C_p \|f\|_{p}.
\endeq
Here $C_p$ is a constant which depends only on $p$.
\end{thm}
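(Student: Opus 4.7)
The plan is to exploit the geometric structure imposed by the Lipschitz hypothesis on the level sets of $v$, write $H_{v,D}$ as a biparameter Haar multiplier with $\{0,1\}$-valued symbol, and split it into an adapted martingale-transform part and a non-adapted paraproduct part.

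Setting $K:=-\log_2 v:(0,1]^2\to\mathbb{N}$ and $F_k:=\{K\le k\}$, I would first verify the geometric consequence of the Lipschitz bound: on any dyadic square $Q$ of side $2^{-\ell}$, either $K$ is constant on $Q$, or $K\ge\ell+1$ everywhere on $Q$. Indeed, if $K(p_i)=k_i$ for $p_1,p_2\in Q$ with $k_1<k_2$, the dyadic target distance between $2^{-k_1}$ and $2^{-k_2}$ equals $2^{-k_1}$, so the Lipschitz constant $1/2$ forces $2^{-k_1}\le \tfrac12\cdot 2^{-\ell}$, giving $k_1\ge\ell+1$. Consequently each $F_k$ is a union of dyadic squares of side at least $2^{-k}$, and $\mathbf{1}_{F_k}$ is measurable with respect to the dyadic $\sigma$-algebra at scale $2^{-k}$. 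I then rewrite
\[
H_{v,D} f(x,y)=\sum_{R=I\times J}\langle f,h_I\otimes h_J\rangle\,h_I(x)h_J(y)\,\mathbf{1}_{F_{k_0(R)}}(x,y),\qquad k_0(R):=\log_2(|I|/|J|),
\]
and split $H_{v,D}=H^A+H^B$ according to whether $|I|\le 2^{-k_0(R)}$ (case A) or $|I|>2^{-k_0(R)}$ (case B).

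In case A the rectangle $R$ sits inside a dyadic square of side $2^{-k_0(R)}$, on which $\mathbf{1}_{F_{k_0(R)}}$ is constant by the geometric fact; thus $H^A$ is a biparameter Haar multiplier with adapted $\{0,1\}$-valued symbol, and the unconditionality of the biparameter Haar basis in $L^p(\mathbb{R}^2)$ (equivalently, the biparameter Littlewood-Paley square function estimate) yields $\|H^A f\|_p\le C_p\|f\|_p$ for every $1<p<\infty$.

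The main obstacle is $H^B$. Here the constraint $|J|<2^{-k_0(R)}$ still forces $J$ into a single dyadic $y$-interval $\hat J$ of length $2^{-k_0(R)}$, so for $(x,y)\in R$ the indicator reduces to a function $\psi_R(x)\in\{0,1\}$ that is constant on the dyadic $x$-subintervals of length $2^{-k_0(R)}$ inside $I$. Haar-expanding $\psi_R(x)h_I(x)$ on $I$ produces three kinds of terms: an $\bar\psi_R^I\,h_I$ piece (an adapted transform handled as in case A), a constant-term paraproduct $|I|^{-1}\langle\psi_R,h_I\rangle\,\mathbf{1}_I$, and finer-scale pieces $\sum_{I'\subsetneq I,\,|I'|>2^{-k_0(R)}}\pm |I|^{-1/2}\langle\psi_R,h_{I'}\rangle\,h_{I'}$. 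Regrouping the resulting double sum by the finer Haar rectangle $R'=I'\times J$ recasts $H^B$ as a finite sum of biparameter paraproducts. The quantitative role of the Lipschitz hypothesis enters here: the dyadic level sets $F_k$ have controlled ``boundary structure'' inside every larger dyadic rectangle, forcing a Carleson measure bound on the Haar coefficients $\langle\psi_R,h_{I'}\rangle$ that appear as symbols; combined with the biparameter square function estimate this closes the $L^p$ bound for $H^B$. The hardest step of the argument, and the only place where the Lipschitz assumption is used quantitatively, is verifying this Carleson measure condition from the Lipschitz property.
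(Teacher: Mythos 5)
Your geometric observation is exactly the content of the paper's Lemma 2.1: if $K=-\log_2 v$, then the Lipschitz bound forces $\mathbf{1}_{F_k}$ to be constant on dyadic squares of side $2^{-k}$, so the indicator $\mathbf{1}_{F_{k_0(R)}}$ is automatically constant in $y$ on $J$ (because $|J|\le 2^{-k_0(R)}$), and the summation constraint in \eqref{0412ee1.2} is therefore independent of $y\in J$. Your Case A is also sound: when $|I|\le 2^{-k_0(R)}$ the indicator is constant on all of $R$ and the corresponding piece is a biparameter Haar multiplier with coefficients in $[0,1]$, bounded by unconditionality of the product Haar system.

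The gap is Case B, which you concede is the hardest step and which you do not prove. After Haar-expanding $\psi_R h_I$ you get symbols $\langle\psi_R,h_{I'}\rangle$ in which $\psi_R=\mathbf{1}_{F_{k_0(R)}}(\cdot,y_J)$ itself changes with $|I|$ (because $k_0(R)=\log_2(|I|/|J|)$), so after regrouping by $R'=I'\times J$ the ``symbol'' attached to $R'$ is not a fixed function but an $|I|$-indexed family; it is not a standard biparameter paraproduct, and it is far from clear that a Carleson condition of the type you invoke holds or that it would close the estimate. You would need to formulate precisely which Carleson embedding you want and deduce it from the Lipschitz hypothesis, and nothing in your sketch does that.

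More importantly, the paper's proof avoids this issue altogether, which suggests your Case B is not the right reduction. After the $y$-independence step, for fixed $x$ and $J$ the set $\mathbf{I}(x,J)$ of admissible intervals $I\ni x$ is an \emph{up-set}: if $I\in\mathbf{I}(x,J)$ and $I\subset I''$ then $I''\in\mathbf{I}(x,J)$, because $|J|/|I''|\le|J|/|I|$. This is automatic and uses no Lipschitz hypothesis at all. Consequently the inner sum
\[
\sum_{I\in\mathbf{I}(x,J)}\langle f,h_I\otimes h_J\rangle\,h_I(x)
\]
is a telescoping Haar sum: it equals a difference of two dyadic martingale averages of $\langle f,h_J\rangle_2$ in the $x$-variable, hence is dominated pointwise by $M_1(\langle f,h_J\rangle_2)(x)$. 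One then applies the Fefferman--Stein vector-valued maximal inequality in $x$ and Littlewood--Paley again in $y$, and the proof is finished with no paraproducts or Carleson measures. In particular the Lipschitz hypothesis is used only qualitatively (in the $y$-independence lemma); your expectation that it must also furnish a quantitative Carleson bound is a misdirection. Replacing your Case A/Case B split and paraproduct strategy by the martingale telescoping over the up-set $\mathbf{I}(x,J)$ closes the argument.
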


We round up the discussion by recalling a third type of assumptions
on the direction field, where a Lipschitz assumption recently surfaced
naturally, namely bi-parameter assumptions. Bateman
\cite{Ba2} and Bateman and the second author \cite{BT} proved 
$L^p$ bounds for $H_u$ under the assumption that $u$ depends
only on one variable, $u(x,y)=u(x,0)$ for all $y$, and $3/2<p<\infty$. 
The first author \cite{Guo1} generalized this to direction fields constant along 
families of Lipschitz curves, highlighting the role of Lipschitz
assumptions in this context.\\

\section{The dyadic model: Proof of Theorem 
\ref{0312theorem1.4}}
 
We write $s=I\times J$ for the dyadic rectangles below and we write
$h_s(x,y)=h_I(x)h_J(y)$.
We write the $p$-th power of the $L^p$ norm of \eqref{0412ee1.2} as an iterated integral:
\beq\label{0412ee2.1}
\int_{0}^1\left[ \int_{0}^1 \left|\sum_J \sum_{I: |J|/|I|\le v(x, y)} \langle f, h_s\rangle h_s(x,y)\ \right|^p dy\right] dx.
\endeq
\begin{lem}\label{0412lemma2.1}
Assume $v$ is as in Theorem \ref{0312theorem1.4}. Let $I\times J$ be a dyadic rectangle in $ (0,1]^2$ and let $x\in I$. 
If for some $y\in J$ we have $|J|/|I|\le v(x,y)$, 
then we have $|J|/|I|\le v(x,y')$ for all $y'\in J$.
\end{lem}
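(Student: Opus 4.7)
The plan is to combine three observations: a bound on the dyadic distance between $(x,y)$ and $(x,y')$, the dyadic Lipschitz hypothesis on $v$, and a short gap argument exploiting the lacunary range of $v$. The argument will actually establish the stronger conclusion $v(x,y')=v(x,y)$.

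First, since $(x,y)$ and $(x,y')$ share their first coordinate and both second coordinates lie in $J$, the smallest dyadic square of $(0,1]^2$ containing them has side length equal to the one-dimensional dyadic distance $d_D(y,y')$, which is at most $|J|$ (because $J$ itself is a dyadic interval containing both $y$ and $y'$). The Lipschitz hypothesis therefore yields
\[
 d_D\bigl(v(x,y),\,v(x,y')\bigr)\;\le\;\tfrac{1}{2}\,d_D\bigl((x,y),(x,y')\bigr)\;\le\;|J|/2,
\]
so $v(x,y)$ and $v(x,y')$ lie in a common dyadic interval of length at most $|J|/2$.

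Next, write $|J|=2^{-b}$ and $v(x,y)=2^{-j}$. Since $|I|\le1$, the hypothesis $v(x,y)\ge|J|/|I|$ forces $v(x,y)\ge|J|$, and hence $j\le b$. At this point it suffices to observe the elementary fact that the dyadic interval of length $2^{-b-1}$ containing $2^{-j}$, namely $(2^{-j}-2^{-b-1},\,2^{-j}]$, contains no other integer power of two: the next smaller candidate $2^{-j-1}$ lies at or below its left endpoint precisely because $j\le b$. Since $v(x,y')$ is also lacunary and lies in this same dyadic interval, it must equal $2^{-j}$, and so $v(x,y')\ge|J|/|I|$.

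The only delicate point is this final gap computation, which rests on the fact that each lacunary value $2^{-j}$ sits at the right endpoint of every dyadic interval of finer scale containing it, leaving no room for another power of two. This is where both hypotheses come together: lacunarity provides the gap, while the Lipschitz bound $\le 1/2$ shrinks the ambient dyadic interval to strictly less than $|J|$ so the gap can be exploited.
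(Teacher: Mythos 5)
Your proof is correct and rests on exactly the same mechanism as the paper's: the dyadic Lipschitz hypothesis bounds $d(v(x,y),v(x,y'))$ by $|J|/2$, and the lacunary structure of the range, together with the fact that a power of two sits at the right endpoint of every finer dyadic interval containing it, shows this bound is incompatible with $v(x,y')$ being a different power of two. The only difference is presentational: the paper argues by contradiction (if $v(x,y')<|J|/|I|\le v(x,y)$, then the smallest dyadic interval containing both values is $(0,v(x,y)]$, forcing $d(v(x,y),v(x,y'))=v(x,y)\ge |J|$, which contradicts the Lipschitz bound), while you argue directly and extract the slightly stronger conclusion $v(x,y')=v(x,y)$. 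Both versions hinge on the same gap computation $2^{-j-1}\le 2^{-j}-2^{-b-1}$ for $j\le b$, and your observation that the equality is what actually holds is a fair and accurate refinement.
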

\begin{proof}
Assume to get a contradiction that $v(x,y')< |J|/|I|$ for some $y'\in J$. Since $v(x,y)\ge |J|/|I|$, the smallest dyadic interval containing both $v(x,y')$ and $v(x,y)$ is $(0,v(x,y)]$ and we have $d(v(x,y),v(x,y'))=v(x,y)$. 
This gives a contradiction, since the Lipschitz assumption implies:
\beq
d(v(x,y),v(x,y'))\le d(y,y')/2\le |J|/2\le |J|/(2|I|).
\endeq
This finishes the proof of Lemma \ref{0412lemma2.1}.\end{proof}
Denote by ${\bf I}(x,J)$ 
the set of all dyadic intervals $I\subset (0,1]$ such that
there exists $y\in J$ with $|J|/|I|\le v(x,y)$. Only the intervals in 
${\bf I}(x,J)$ have non-zero contribution to the inner sum of 
\eqref{0412ee2.1}. By Lemma \ref{0412lemma2.1}, the condition
$|J|/|I|\le v(x,y)$ becomes void for intervals in 
${\bf I}(x,J)$ and we may write for \eqref{0412ee2.1}:
\beq\label{0412ee2.8}
\int_{0}^1 \left[\int_{0}^1 \left| \sum_{J}
\left(\sum_{I\in {\bf I}(x,J)}
\langle f, h_s\rangle h_s \right)\right|^p dy\right]dx.
\endeq
We have gained independence of the inner summation constraint in $y$ and may use Littlewood-Paley theory in this variable to estimate the last display by 
\beq
\lesssim \int_{0}^1\left[\int_{0}^1\left( \sum_{J}\left|\sum_{I\in {\bf I}(x,J)} 
\langle f, h_s\rangle h_s \right|^2 \right)^{p/2} dy\right] dx.
\endeq
The set ${\bf I}(x,J)$ is convex in the sense that if it contains two intervals $I$ and $I'$, then it also contains all $I''$ with $I\subset I''\subset I$. Hence we can telescope the Haar sum in the variable $x$ into a difference of two martingale averages and estimate these averages by the maximal operator. Hence the last display is bounded by 
\beq
\lesssim \int_{0}^1\left[\int_{0}^1\left( \sum_{J}\left|M_1( 
\langle f, h_J\rangle_2 h_J) \right|^2 \right)^{p/2} dy\right] dx,
\endeq
where $M_1$ denotes the one-dimensional maximal operator in the $x$-variable
and $\langle f,h_J\rangle_2$ denotes the inner product of $f$ in the second variable with $h_J$. We interchange the order of integration in $x$ and $y$ and apply the 
Fefferman-Stein maximal inequality to estimate the last display by
\beq
\lesssim \int_{0}^1\left[\int_{0}^1\left( \sum_{J}\left|
\langle f, h_J\rangle_2 h_J \right|^2 \right)^{p/2} dx\right] dy\ .
\endeq
Changing the integration order back and applying the Littlewood-Paley 
theory in the second variable again estimate the last display 
by $\lesssim \|f\|_p^p$, as desired.

\section{Proof of Theorem \ref{main}: Initial reductions}\label{2909section2}

We write the Fourier transform of $f$ as
$$
\widehat{f}(\xi,\eta)=\int \int f(x,y) e^{-2\pi i (x\xi+\eta y)}\, dxdy\ .
$$
Our first step is to pass from $H_v$ to a directional Fourier multiplier operator.
The directional Fourier multiplier for a bounded measurable function 
$m:\R\to \R$ 
is defined as
\beq
T[m]f(x,y):= 
\int \int \widehat{f}(\xi,\eta) m (\xi +\eta v(x,y)) e^{2\pi i (x\xi+y\eta)}\, d\xi d\eta \ .
\endeq
Let $\phi$ be a real even Schwartz function with $\phi(0)=1$, whose Fourier 
transform vanishes on $[-1,1]$ and outside $[-2,2]$:
\begin{equation}\label{supportphi}
{\rm supp}(\widehat{\phi})\subset [-2,2]\setminus [-1,1].
\end{equation}
A calculation gives
\beq
T[ \widehat{\phi} * (i\pi {\rm sign})]f(x,y)= 
p.v. \int f(x-t,y-v(x,y)t)\phi(t) \frac {dt} t\ .
\endeq
Comparing with $H_v$, since $t^{-1}(1_{[-1,1]}-\phi(t))$ is bounded and rapidly decaying, we obtain by a standard superposition argument 
$$|(H_v - T[ \widehat{\phi} * (i\pi {\rm sign})])f(x,y)|\le 
C \sup_{\epsilon} \frac{1}{\epsilon} \int_{-\epsilon}^\epsilon  |f(x-t,y-v(x,y)t)| {dt}.$$ 
By Theorem \ref{nsw}, the right hand side is bounded in $L^p$. 
Hence it suffices to prove Theorem \ref{main} with 
$T[ \widehat{\phi} * {\rm sign}]$ in place of $H_v$. In the following, we will denote 
\beq
m=\widehat{\phi} * {\rm sign}.
\endeq

Note that $T[m]$ is well defined for functions whose Fourier transform is rapidly
decaying. Given a Schwartz  function $f$, we split it as a sum of two functions, one whose Fourier transform is supported in $\eta\ge0$ (that is the set
$\{(\xi, \eta): \eta\ge 0\}$), and one whose Fourier transform is supported in 
$\eta\le 0$.
It suffices to prove the bound of Theorem \ref{main} separately for the two functions. By symmetry it suffices to consider the first function, we may thus
without loss of generality assume $\widehat{f}$ is supported in 
$\eta\ge 0$.

Next we split the function as a sum of a function whose Fourier transform 
is supported in 
$|\xi| \le 100$ and one whose Fourier transform is supported 
in $|\xi|\ge 100$.
We consider the two functions separately.
First assume $\widehat{f}$ is supported in 
$|\xi| \le 100$.

Let $\phi$ be an even real Schwartz function as in \eqref{supportphi}. Moreover we assume the normalization
\begin{equation}\label{normalizephi}
\int_0^{\infty} \widehat{\phi}(t)\frac{dt}t=1.
\end{equation}
Set 
\begin{equation}\label{scalephi}
\phi_t(x):=t\phi(tx)
\end{equation}
and define the Littlewood-Paley operator in the second variable 
\beq
P_{t}f(x,y)=\int f(x,y-z) \phi_t (z)\, dz .
\endeq
We have by the classical Calder\`on reproducing formula
$$f=\int_0^\infty P_{t} f \frac {dt}{t}.$$
Hence
$$T[m] f(x,y)=\int_0^\infty T_m[P_{t} f](x,y) \frac {dt}{t}$$
\begin{equation}\label{verticallp}
=
\int_{0}^{1000/v(x,y)} T_m[P_{t} f](x,y) \frac {dt}{t}+
\int_{1000/v(x,y)}^\infty P_t f (x,y)\frac {dt}{t} .
\end{equation}
Here we have dropped the operator $T[m]$ in the second integral since for
$t>1000/v(x,y)$ the support of $\widehat{P_tf}$ is contained in
$|\xi| \le 100$ and $\eta\ge 1000/v(x,y)$, where we have
$m(\xi+\eta v(x,y))=1$.

Note that there is a Schwartz function $\varphi(\cdot)$ 
which coincides with 
\begin{equation}\label{teles}
\int_0^1 \phi_t(\cdot) \, \frac{dt}t
\end{equation} outside the origin. Moreover, we denote $\varphi_t(\cdot)=t\varphi(t\cdot)$.
We estimate the second term in $\eqref{verticallp}$ by
$$|\int_{1000/v(x,y)}^\infty P_t f (x,y)\frac {dt}{t}- f(x,y)|=
|\int_0^{1000/v(x,y)}P_t f (x,y) \frac{dt} t|$$
$$=|1000/v(x,y) \int  \varphi_{1000/v(x,y)}(z) f(x,y-z)\, dz| 
\lesssim M_2 f(x,y) ,$$
where $M_2$ is the maximal operator in the second variable,
$$M_2 f(x,y)=\sup_{\epsilon>0}\frac 1{2\epsilon} 
\int_{-\epsilon}^{\epsilon}|f(x,y-z)|\, dz .$$
The classical bound on $M_2$ in $L^p$ takes care of the second term in
$\eqref{verticallp}$.

To estimate the first term in $\eqref{verticallp}$, we note
$$\int_0^{1000/v(x,y)} T[m] P_t f (x,y) \frac{dt}t$$
$$=
\int \int \widehat{f}(\xi,\eta) \widehat{\varphi}(\eta v(x,y)/1000)m (\xi +\eta v(x,y)) e^{2\pi i (x\xi+y\eta)}\, d\xi d\eta \ .$$
Note that 
$\widehat{f}(\xi,\eta) \widehat{\varphi}(\eta v(x,y)/1000)$ is nonzero only if 
$|\xi|\le 100$ and $ \eta v(x,y)\le 2000 $.
Moreover, 
we have for $n\ge 0$ the symbol estimates 
\beq\begin{split}
& |\partial_\xi ^n (\widehat{\varphi}(\eta v(x,y)/1000)m(\xi+\eta v(x,y)))|\le C_n\ ,\\
& |\partial_\eta ^n(\widehat{\varphi}(\eta v(x,y)/1000) m(\xi+\eta v(x,y))|\le C_n v(x,y)^{n}\ .
\end{split}\endeq
This means that if we dilate the multiplier 
$\widehat{\varphi}(\eta v(x,y)/100)m(\xi+\eta v(x,y))$ by a factor $v(x,y)$
in direction $\eta$ we obtain a ($(x,y)$-dependent)  multiplier with uniform symbol estimates, which therefore can be controlled pointwise by the Hardy-Littlewood maximal
function. The undilated multiplier can then be estimated by the strong maximal function. Hence we obtain for the second term in \eqref{verticallp} the estimate
\beq
\int_0^{1000/v(x,y)}T[m]P_tf(x,y)\frac{dt}t\lesssim M_1 M_2 f(x,y),
\endeq
where $M_1$ and $M_2$ are the maximal operators in the first and 
second variable.

This concludes the case that $\widehat{f}$ is supported in
$|\xi| \le 100$. Henceforth we assume that
$\widehat{f}$ is supported in $|\xi| \ge 100$. 
We note that for $\xi>100$ and $\eta>0$ we have
$m(\xi+\eta v(x,y))=1$ and hence $T[m]f=f$
if $f$ is supported in $\xi>100$. We may thus assume
$f$ is supported in $\xi<-100$.\\

In what follows, the truncation of the directional Hilbert transform
turns out unnecessary and we shall for simplicity remove it as follows.
By the fundamental theorem of calculus we write
\beq
m(\xi)=
-1+\int_{-10}^{10} m'(\tau)1_{(\tau,\infty)}(\xi)\, d\tau,
\endeq
where we have used that $m'$ vanishes outside $[-10,10]$.
Since $T[-1]$ is clearly bounded, it suffices to prove
bounds on $T[1_{(\tau,\infty)}]$ for any $|\tau|\le 10$.

Define
\beq
H_{\tau, j}f(x,y):= 
\int \int \widehat{f}(\xi,\eta) 1_{(\tau,\infty)} (\xi +\eta 2^{-j}) e^{2\pi i (x\xi+y\eta)}\, d\xi d\eta \ .
\endeq
We need the Cordoba-Fefferman \cite{CF2} inequality in the form of Theorem 6.1
in \cite{DS}. It states that for any $1<p<\infty$ and any collection of functions $f_j$
in $L^p(\R^2)$ we have  
\beq
\|(\sum_j|H_{0,j}f_j|^2)^{1/2}\|_p\lesssim \|(\sum_j|f_j|^2)^{1/2}\|_p .
\endeq
This is proved in \cite{DS} using the maximal function bound of Theorem 
\eqref{nsw}. Applying the above estimate with the modulated functions
$$f_j(x,y) e^{2\pi i \tau x},$$ we obtain more generally 
\begin{equation}\label{cfineq}
\|(\sum_j|H_{\tau ,j}f_j|^2)^{1/2}\|_p\lesssim \|(\sum_j|f_j|^2)^{1/2}\|_p .
\end{equation}

Let $\phi_s$ be as in \eqref{supportphi}, \eqref{normalizephi}, 
\eqref{scalephi} and define the Littlewood-Paley operator in the first variable
\beq
P_{s,1}g(x,y):=\int g(x-z,y) \phi_s (z)\, dz .
\endeq
Since $\widehat{f}$ is supported in $\xi<-100$, we have the Calder\`on reproducing
formula in the form 
\beq
f= \int_{10}^\infty P_{s,1} f\,\frac{ds}s .
\endeq
Now let $\psi$ be an even real Schwartz function supported in $[-1,1]$ with $\widehat{\psi}(0)=0$
and 
$$\int_0^\infty (\widehat{\psi}(t))^2\frac {dt}t=1,$$
and define 
\beq
P_{t,2}g(x,y):=\int g(x,y-z) \psi_t (z)\, dz .
\endeq
Then by the reproducing formula again,
\beq
f= \int_{10}^\infty \int_0^\infty P_{t,2}^2P_{s,1} f\,\frac{dt}t\frac{ds}s.
\endeq
For $j\in \Z$ let $E_j$ be the set of $(x,y)$ such that $v(x,y)=2^{-j}$.
Then
$$T[1_{(\tau,¸\infty)}] f
=\sum_{j\ge 0} 1_{E_j} H_{\tau,j}  f
$$
\begin{equation}\label{dsum}
=\sum_{d\in \Z} \sum_{j\ge 0}  1_{E_j} 
\int_{10}^\infty \int_{2^{j+d}s}^{2^{j+d+1}s}  H_{\tau,j}  P_{t,2}^2P_{s,1} f\,\frac{dt}t\frac{ds}s.
\end{equation}
We consider a summand in \eqref{dsum} for fixed $d$. Setting
\beq
f_j= \int_{10}^\infty \int_{2^{d+j}s}^{2^{d+j+1}s}   P_{t,2}^2P_{s,1} f\,\frac{dt}t\frac{ds}s,
\endeq
we recognise with the Cordoba-Fefferman inequality \eqref{cfineq}
$$
\left\|\sum_j  1_{E_j} 
\int_{10}^\infty \int_{2^{j+d}s}^{2^{j+d+1}s}  H_{\tau,j}  P_{t,2}^2P_{s,1} f\,\frac{dt}t\frac{ds}s\right\|_p$$
$$
=\left\|\sum_j  1_{E_j} 
 H_{\tau,j}  f_j \right\|_p\le \|(\sum_j |H_{\tau,j}f_j|^2)^{1/2}\|_p\lesssim 
\|(\sum_j |f_j|^2)^{1/2}\|_p.
$$
To estimate the last term, by Khintchine's inequality, we need to estimate
\beq
\sum_j \epsilon_j f_j=\sum_j \epsilon_j
\int_{10}^\infty \int_{2^{d+j}s}^{2^{d+j+1}s}   P_{t,2}^2P_{s,1} f\,\frac{dt}t\frac{ds}s
\endeq
in $L^p$, uniformly in all choices of $|\epsilon_j|\le 1$.
However, the last expression is identified as a Marcinkiewicz multiplier 
applied to $f$, see Chapter IV in \cite{S}.

This estimates each term in \eqref{dsum} uniformly in $d$. 
To obtain summability in $d$, we proceed to refine this estimate,
both for $d\le -5$ and for $d\ge 5$.

Consider first $d\le -5$. 
Let $\varphi$ be a Schwartz function with $\widehat{\varphi}$ supported 
in $[-2,2]$
and constantly equal to one on $[-1,1]$. Define
\beq
A_{t,2}g(x,y)=\int g(x,y-z) \varphi_t (z)\, dz .
\endeq
We claim that for $d\le -5$ the summand in \eqref{dsum} is equal to
\beq
\sum_j  1_{E_j} 
\int_{10}^\infty \int_{2^{j+d}s}^{2^{j+d+1}s}  
H_{\tau,j} (1- A_{2^{-d/2}t,2})  P_{t,2}^2P_{s,1} f
\,\frac{dt}t\frac{ds}s .
\endeq
Namely, for fixed $j$ and $(x,y)\in E_j$ and fixed $s>10$ and
$2^{j+d} s\le t\le 2^{j+d+1}s$ we have 
$$H_{\tau,j} A_{2^{-d/2}t,2}  P_{t,2}^2P_{s,1} f=$$
\begin{equation}\label{multsupp}
\int\int \widehat{P_{t,2}^2f}(\xi,\eta) 1_{(\tau,\infty)}(\xi+\eta 2^{-j}) 
\widehat{\phi}(s^{-1} \xi) \widehat{\varphi}(2^{d/2}t^{-1} \eta)
e^{2\pi i (x\xi+y\eta)} d\xi d\eta.\end{equation}
We claim the last expression vanishes, since the integrand vanishes.
For $\widehat{P_{t,2}^2f}(\xi,\eta)$ not to vanish we need $\xi<-100$. 
For $\widehat{\phi}(s^{-1} \xi)$ not to vanish we need
$s<|\xi|<2s$, and for $\widehat{\varphi}(2^{d/2}t^{-1} \eta)$ not to vanish
we need $\eta\le 2^{1-d/2}t$. Hence
\beq
\xi+\eta 2^{-j}\le -s + 2^{1-d/2-j}t\le -s+2^{1+d/2}s\le -50.
\endeq
Hence $1_{(\tau,\infty)}(\xi+\eta 2^-j)=0$ since $|\tau|\le 10$ and the 
integrand vanishes.

Applying the Cordoba-Fefferman inequality as above and observing 
that, by the Marcinkiewicz multiplier theorem, for any choices
of $|\epsilon_j|\le 1$ we obtain
\beq
\|\sum_j \epsilon_j
\int_{10}^\infty \int_{2^{d+j}s}^{2^{d+j+1}s}  
P_{t,2}P_{s,1} (1-A_{2^{d/2}t,2}) P_{t,2}
f\,\frac{dt}t\frac{ds}s\|\lesssim 2^{-d},
\endeq
that is we obtain the desired decay in $d$.
Here we have used that for $d\le -5$ the portion
$(1-A_{2^{d/2}t,2}) P_{t,2}$ produces very small 
symbol estimates for the Marcinkiewicz multiplier.

To obtain good bounds for $d\ge 5$ we compare with the operator
\begin{equation}\label{soperator}
S f(x,y):= \sum_{d\ge 5}\sum_j 1_{E_j} 
\int_1^\infty \int_{2^{d+j}s}^{2^{d+j+1}s} 
P_{t,2}^2P_{s,1} f\,\frac{dt}t\frac{ds}s ,
\end{equation}
which we will estimate in the next section.

We claim that
$$\sum_j  1_{E_j} 
\left[\int_{10}^\infty \int_{2^{j+d}s}^{2^{j+d+1}s}  H_{\tau,j}  P_{t,2}^2P_{s,1} f\,\frac{dt}t\frac{ds}s-
\int_{10}^\infty \int_{2^{d+j}s}^{2^{d+j+1}s} 
P_{t,2}^2P_{s,1} f\,\frac{dt}t\frac{ds}s\right]$$
$$= \sum_j  1_{E_j} 
\int_1^\infty \int_{2^{j+d}s}^{2^{j+d+1}s}  H_{\tau,j}  A_{2^{-d/2}t,2}P_{t,2}^2
P_{s,1} f\,\frac{dt}t\frac{ds}s$$
The argument is similar as before. The integrand on the right hand side 
takes again the form \eqref{multsupp}. We need that for those $\eta>0$
with $\widehat{\varphi}(2^{d/2}t^{-1}\eta)\neq 1$, that is
$\eta>t2^{-d/2}$, we have that either the integrand vanishes
or $1_{(\tau,\infty)}(\xi+\eta 2^{-j})=1$. But if the integrand does not vanish, 
we have
\beq
\xi+\eta 2^{-j}\ge -2s + 2^{-d/2-j}t\ge -2s+2^{d/2}s\ge 50
\endeq
This proves the claim. 

We then obtain similarly decay for Marcinkiewicz multiplier estimates,
using that for $d>5$ the portion $A_{2^{-d/2}t,2}P_{t,2}^2$ produces
small symbol estimates since $\widehat{\psi}(0)=0$. This finishes all steps of initial reductions.

\section{Proof of Theorem \ref{main}: Main argument}

We rewrite the operator $S$ as

\beq
\int_{10}^\infty  \int_{32s/v(x,y)}^\infty P_{t,2}^2 P_{s,1} f(x,y) \frac{dt}t
\frac{ds}s .
\endeq
It suffices to prove for every pair of functions $f$, $g$ with $\|f\|_{p}=1$ and $\|g\|_{p'}=1$ that
$$\int \int \int_{10}^\infty  \int_{32 s/v(x,y)}^\infty P_{t,2}^2 P_{s,1} f(x,y) 
\overline{g(x,y)}\frac{dt}t \frac{ds}s \, dxdy\lesssim 1 .$$
We fix $s$ and $x$ and consider the integrand in these variables:
\beq
 \int  \int_{32 s/v(x,y)}^\infty P_{t,2}^2 P_{s,1} f(x,y) 
\overline{g(x,y)}\frac{dt}t dy . 
\endeq
We write out one of the $P_{t,2}$ convolution operators to obtain
for the last display:
\beq
 \int   \int_{32 s/v(x,y)}^\infty \int P_{t,2} P_{s,1} f(x,y-z)
\psi_t(z)  
\overline{g(x,y)} dz\,  \frac{dt}t  dy .
\endeq
We pass the $z$ integration outside the $t$ integration. Then we compare 
the value $v(x,y)$ occuring in the specification of the integraton domain with the value $v(x,y-z)$. The difference we consider as an
error term
\beq E(x,s):=  \int  \int \int_{32 s/v(x,y)}^{32 s/v(x,y-z)}  P_{t,2} P_{s,1} f(x,y-z)
\psi_t(z)  
\overline{g(x,y)}  \,  \frac{dt}t  dz dy ,
\endeq
which we estimate later. In the remaining integral, with $t$-integration
in the domain $s/v(x,y-z)<t<\infty$, we
use the variable $\tilde{y}=y-z$ in place of $y$ and obtain 
$$ \int  \int  \int_{32 s/v(x,\tilde{y})}^\infty  P_{t,2} P_{s,1} f(x,\tilde{y})
\psi_t(z)  
\overline{g(x,\tilde{y}+z)} \,  \frac{dt}t  d\tilde{y} dz .$$
The $z$ integral is recognised as a Littlewood-Paley operator
acting on $g$. Writing again the $x,s$ integrations and calling
$\tilde{y}$ again $y$ we need to estimate
\beq
 \int \int \int_{10}^\infty  \int_{32 s/v(x,y)}^\infty  P_{t,2} P_{s,1} f(x,y)
\overline{P_{t,2}g(x,y)}\,  \frac{dt}t  \frac{ds}s \, dxdy .
\endeq

We now interchange the $s$ and $t$ integrations and apply Cauchy-Schwarz
and H\"older to estimate the last display by
$$ \int \int \int_{320 /v(x,y)}^\infty  \int_{10}^{t v(x,y)/32 }P_{t,2} P_{s,1} f(x,y)
\overline{P_{t,2}g(x,y)}\,  \frac{ds}s  \frac{dt}t \, dxdy$$
$$ \le \int \int (\int_{0}^\infty | \int_{10}^{tv(x,y)/32}P_{t,2} P_{s,1} f(x,y) 
\frac{ds}s |^2\frac{dt}t)^{1/2}
(\int_0^\infty |\overline{P_{t,2}g(x,y)}|^2\frac {dt}t)^{1/2} 
\, dxdy$$
$$ \le \left\| \left\|  (\int_{0}^\infty | \int_{10}^{tv(x,y)/32}P_{t,2} P_{s,1} f(x,y) 
\frac{ds}s |^2\frac{dt}t)^{1/2} \right\|_{L^p(x)}\right\|_{L^p(y)}.$$
Here we have used the Littlewood Paley square function
estimate for the function $g$ in the second variable.
Telescoping the $s$ integral similarly to \eqref{teles} 
and using the maximal function 
$M_1$ in the first direction, we estimate the last display by
\beq
 \lesssim 
\left\| \left\|  (\int_{0}^\infty M_1(P_{t,2} f)(x,y) 
 )^2\frac{dt}t)^{1/2} \right\|_{L^p(x)}\right\|_{L^p(y)} .
 \endeq
Applying the Fefferman-Stein vector-valued inequality for the maximal
function in the first variable gives the bound
\beq
 \lesssim 
\left\| \left\|  (\int_{0}^\infty P_{t,2} f(x,y) 
\frac{ds}s |^2\frac{dt}t)^{1/2} \right\|_{L^p(x)}\right\|_{L^p(y)} .
\endeq
Commuting the $L^p$ norms and using the Littlewood-Paley square function estimate in the second variable controls the last display by  $\lesssim \|f\|_p=1$.\\

We turn to the estimate for the error term $E$.
We distinguish the case $v(x,y-z)<v(x,y)$ and
$v(x,y-z)>v(x,y)$ , more precisely we write $E=E_<+E_>$ with $E_<(x,s)$
equal to  
$$
\int  \int 1_{s/v(x,y-z)>s/v(x,y)}\int_{32 s/v(x,y)}^{32 s/v(x,y-z)}  P_{t,2} P_{s,1} f(x,y-z)
\psi_t(z)  
\overline{g(x,y)}  \,  \frac{dt}t  dz dy .$$
Define $w:\R^2\to (0,1]$ by 
\beq
\log_2 w(x,y)= \left[1/2+\log_2 u\right],
\endeq
that is $w$ at any point is either equal to $v$ or twice as large.
We claim that $E_<(x,s)$ is equal to  
\begin{equation}\label{domainclaim}
\int  \int \int_{32 s/ w(x,y-z)}^{64 s/w(x,y-z)}  P_{t,2} P_{s,1} f(x,y-z)\times 
\end{equation}
$$\psi_t(z)  
\overline{g(x,y)} 1_{2v(x,y-z)=w(x,y-z)} 1_{v(x,y)=w(x,y)} \,  \frac{dt}t  dz dy .$$

To see the claim, note that the only change between the expressions concerns  the
coding of the domain of integration. Thus we need to show that the domains of integration are equal. Consider a point in the domain of integration
of the defining expression for $E_>$. We thus have $s/v(x,y-z)>s/v(x,y)$. We may also  assume
that $\psi_t(z)\neq 0$, or else the integrand vanishes. Then $|z|\le t^{-1}$ from the support
of $\psi_t$. By the Lipschitz assumption on $u$, 
\beq
|u(x,y)-u(x,y-z)|\le  |z|\le  t^{-1}\le  u(x,y-z)/32 , 
\endeq
where in the last inequality we have used $t\in [32 s/v(x,y), 32 s/v(x,y-z)]$ and $s>1$.
Thus $\log_2 u(x,y)$ and $\log_2 u(x,y-z)$ differ by less than $1/4$. 
Since $v(x,y)>v(x,y-z)$ with a strict inequality, then $u(x,y)$ has to be 
slightly above an integer power of $2$ and $u(x,y-z)$ has to be slightly below
the integer power. Hence we conclude
\beq
2 v(x,y-z)=v(x,y)=w(x,y-z)=w(x,y).
\endeq
This shows that
the essential domain of integration in the defining expression of $E_<$
is contained in the essential domain of integration of \eqref{domainclaim}.
The converse implication is rather straight forward.\\

Setting $\tilde{g}(x,y)= g(x,y)1_{v(x,y)=2w(x,y)}$ we thus obtain with the change of variables $\tilde{y}=y-z$ for $E_<(x,s)$
\beq
 \int  \int \int_{32 s/w(x,\tilde{y})}^{64 s/w(x,\tilde{y})}  P_{t,2} P_{s,1} f(x,\tilde{y})
\psi_t(z)  
\overline{P_{t,2}\tilde{g}(x,\tilde{y})} 1_{v(x,\tilde{y})=w(x,\tilde{y})}  \,  \frac{dt}t  dz d\tilde{y}
\endeq
Integrating in $x$ and $s$ with the $s$ integration inside the $t$ integration
and applying Cauchy-Schwarz in the $t$ integration and calling $\tilde{y}$ again $y$
 we estimate the last display similarly to before by
$$ \le \int \int (\int_{0}^\infty | \int_{32t w(x,y)}^{64t w(x,y) }P_{t,2} P_{s,1} f(x,y) 
\frac{ds}s |^2\frac{dt}t)^{\frac 1 2}
(\int_0^\infty |\overline{P_{t,2}\tilde{g}(x,y)}|^2\frac {dt}t)^{\frac 1 2} 
\, dxdy$$
$$ \lesssim \left\| \left\|  (\int_{0}^\infty | M_1(P_{t,2}  f)(x,y) 
\frac{ds}s |^2\frac{dt}t)^{1/2} \right\|_{L^p(x)}\right\|_{L^p(y)}\lesssim 1 .$$
The finishes the estimate for $E_<$. The estimate for $E_>$ is very similar, hence we leave it out.

\section*{Acknowledgement}
The second author acknowledges support by the Hausdorff Center for Mathematics. We thank Pavel Zorin-Kranich and Olli Saari for pointing out an inaccuracy in Lemma 2.1 of the originally
posted version of the preprint.

Shaoming Guo, Institute of Mathematics, University of Bonn\\
\indent Address: Endenicher Allee 60, 53115, Bonn\\
\indent Current address: 831 E Third St, Bloomington, IN 47405\\
\indent Email: shaoguo@iu.edu\\

Christoph Thiele, Institute of Mathematics, University of Bonn\\
\indent Address: Endenicher Allee 60, 53115, Bonn\\
\indent Email: thiele@math.uni-bonn.de\\

\end{document}